\newtheorem{theorem}{Theorem}[section]
\newtheorem{lemma}[theorem]{Lemma}
\newtheorem{question}[theorem]{Question}
\theoremstyle{definition}
\theoremstyle{remark}
\newtheorem*{sub-claim}{sub-claim}
\newcommand{\Z}{\mathbb{Z}}
\newcommand{\R}{\mathbb{R}}
\newcommand{\C}{\mathcal{C}}
\newcommand{\explicitSet}[1]{\left\lbrace #1 \right\rbrace}
\newcommand{\set}[2]{\explicitSet{#1 \colon #2}}
\renewcommand{\>}{\rangle}
\newcommand{\card}[1]{\left\lvert #1 \right\rvert}
\newcommand{\p}{\mathbb{P}}
\newcommand{\w}{\omega}
\newcommand{\sub}{\subseteq}
\begin{document}

\title[Compact groups]{Every compact group can have a non-measurable subgroup}
\author{W. R. Brian}
\address {
William R. Brian\\
Department of Mathematics\\
Tulane University\\
6823 St. Charles Ave.\\
New Orleans, LA 70118}
\email{wbrian.math@gmail.com}
\author{M. W. Mislove}
\address {
Michael W. Mislove\\
Department of Mathematics\\
Tulane University\\
6823 St. Charles Ave.\\
New Orleans, LA 70118}
\email{mislove@tulane.edu}

\maketitle

\begin{abstract}
We show that it is consistent with ZFC that every compact group has a non-Haar-measurable subgroup. In addition, we demonstrate a natural construction, and we conjecture that this construction always produces a non-measurable subgroup of a given compact group. We prove that this is so in the Abelian case.
\end{abstract}

\section{The big question}

Every compact group admits a unique translation-invariant probability measure, namely its (normalized) Haar measure. In this note we discuss the question of whether every compact group has a non-Haar-measurable subgroup, henceforth referred to as \emph{the big question}. Our main result is that it is consistent with ZFC that the answer to the big question is yes.

The big question goes back at least as far as 1985 (see \cite{S&S}). A positive answer to the big question was given for compact Abelian groups by Comfort, Raczkowski, and Trigos-Arrieta in \cite{CRT}, and a more thorough analysis of non-measurable subgroups of the real line is given in \cite{Krz}. Partial progress on the non-commutative case was made in \cite{Gel}, and a good deal of further progress was made in \cite{HHM}. 

With the exception of Karazishvili's paper \cite{Krz}, these results have been accomplished by finding subgroups of countable index. If a subgroup of a compact group $G$ has index $\aleph_0$, then it is non-measurable (by the translation invariance and countable additivity of Haar measure). If a subgroup has finite index but is not closed, then it is non-measurable (see \cite{HHM}, Proposition 1.1(b)).

Hunting for countable index subgroups has been a powerful tool for answering the big question: in \cite{HHM}, this technique is used to solve every case except for a certain class of metric profinite groups. However, this last remaining case cannot be solved simply by finding countable index subgroups, because some groups in this class do not have non-closed subgroups of countable index. Therefore a new construction for finding non-measurable subgroups wil be needed before the big question can be put to rest.

In Section~\ref{sec:consistency} we will prove our main theorem. In Section~\ref{sec:constructions} we will give a natural construction for obtaining subgroups of a given compact group. For Abelian groups, we prove that these subgroups are non-measurable, and we conjecture that they are also non-measurable for arbitrary metric profinite groups as well. Proving this conjecture correct would result in a positive general solution to the big question.

\section{The main theorem}\label{sec:consistency}

The proof of our main theorem requires four ``lemmas'', each of which is an important theorem in its own right.

\begin{lemma}[Hern\'andez, Hoffman, and Morris]\label{HHMtheorem}
If $G$ is an infinite compact group other than a metric profinite group, then $G$ has a nonmeasurable subgroup.
\end{lemma}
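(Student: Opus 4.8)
The plan is to reduce everything to producing a subgroup of countably infinite index, since such a subgroup is automatically non-measurable (by the translation-invariance and countable additivity of Haar measure recalled in the introduction) and is automatically non-closed: a closed subgroup of countable index in a compact group would be open, and hence of finite index, because the coset space would be a countable compact Hausdorff space and therefore finite. The basic tool for producing such subgroups is pullback: if $\phi\colon G\to H$ is a continuous surjective homomorphism and $K\leq H$ has countable index, then $\phi^{-1}(K)\leq G$ has the same index $[H:K]$, so it suffices to map $G$ continuously onto some group $H$ in which a countable-index subgroup can be exhibited, since Haar measure pushes forward to Haar measure. Thus the first step is to split the hypothesis ``$G$ is infinite but not metric profinite'' into the two structurally distinct cases it allows: either $G$ is non-metrizable, i.e.\ $w(G)\geq\aleph_1$, or $G$ is metrizable but has nontrivial identity component $G_0\neq\{e\}$.

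Second, I would dispatch the cases that can be pushed down to the Abelian setting, where \cite{CRT} already supplies non-measurable subgroups. In the non-metrizable case I would pass to the abelianization $A=G/\closure{[G,G]}$: if $A$ is itself non-metrizable, then its dual is an infinite Abelian group of uncountable rank, so $A$ has a countably infinite abstract quotient, and pulling back gives the desired subgroup of $G$; the remaining subcase, where the uncountable weight sits in a non-Abelian part, I would handle by mapping $G$ onto an uncountable product of finite groups and using that such a product, viewed abstractly as a module of uncountable rank, surjects onto a countably infinite group. For the metrizable positive-dimensional case I would use the structure of the connected group $G_0$: if its central torus is nontrivial, then $G$ maps onto a circle $\mathbb{T}$, and $\mathbb{T}$ has subgroups of countable index because its torsion-free rank is $\continuum$, so again one may pull back.

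The hard part will be the perfect pieces. When $G_0$ is semisimple it is a quotient of a product of compact simple Lie groups, each of which is a perfect, essentially simple abstract group; such groups have no proper subgroup of countable index at all, so the pullback-of-countable-index strategy breaks down completely and a different certificate of non-measurability is required. Here I would instead imitate the Bernstein/transversal constructions used for the real line in \cite{Krz}: exploit that $G$ is a nonatomic measure space of cardinality $\continuum$ whose closed sets of positive measure number $\continuum$, well-order them, and build a subgroup meeting every such set and its complement, using the divisibility of the connected Lie group to keep the construction compatible with the group operation. Verifying that this transfinite construction preserves the subgroup property --- and seeing precisely why the analogous construction resists the metric profinite case, whose rigidity (finite-index subgroups forced to be open) is exactly what leaves the big question open --- is the crux of the argument.
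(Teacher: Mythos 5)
The first thing to note is that the paper does not prove this lemma at all: it is stated with attribution to Hern\'andez, Hofmann, and Morris, and its entire ``proof'' is the citation to \cite{HHM}. So your proposal is not an alternative to an argument in the paper --- it is an attempt to reprove, from scratch, the main theorem of a separate research paper, and it must be judged as a standalone proof. Judged that way, it is incomplete, and the incompleteness sits exactly where the difficulty of \cite{HHM} lives. You say yourself that verifying the transfinite Bernstein-type construction ``is the crux of the argument'': the case your whole reduction bottoms out on (a compact connected semisimple Lie group such as $SO(3)$, which is abstractly simple, perfect, and has no infinite abelian quotient, so neither the pullback trick nor \cite{CRT} applies) is precisely the case you do not carry out. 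The tension there is real: when you place a point $x$ into the subgroup under construction you acquire all of $\<H_\alpha,x\>$, so you must show that for each of the fewer-than-$\continuum$ designated ``outside'' points $y$ and each word $w$ over the partial subgroup, the solution set $\set{x}{w(x)=y}$ is small; this requires algebraicity or dimension arguments special to Lie groups, none of which appear. Moreover, the structural claim justifying the detour --- that a compact simple Lie group has no proper subgroup of countable index at all --- is asserted without proof, and it sits in direct tension with the paper's own account of \cite{HHM}: the introduction states that in \cite{HHM} the countable-index technique ``is used to solve every case except for a certain class of metric profinite groups.'' Either your claim is false, in which case what is missing is exactly a countable-index construction for $SO(3)$, or it is true, in which case the unverified Bernstein construction is carrying the entire theorem. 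Either way there is a hole at the center of the proof.

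Several intermediate reductions are also wrong or unjustified as stated. First, ``if the central torus of $G_0$ is nontrivial then $G$ maps onto a circle'' is false: take $G=O(2)$, whose identity component is a nontrivial torus but whose abelianization is $\Z/2\Z$, so it admits no surjection onto $\R/\Z$; the reduction must go through finite-index subgroups rather than quotients, and note also that $[G:G_0]$ need not be finite for a general metrizable compact $G$, so ``reduce to $G_0$'' is not free either. Second, in the non-metrizable case, nothing in the hypothesis gives a continuous surjection of $G$ onto an uncountable product of finite groups, and the follow-up claim that such a product ``surjects onto a countably infinite group'' by a rank argument is inapplicable when the factors are perfect: every element of $A_5$ is a commutator, so $\prod_{\alpha<\w_1}A_5$ is abstractly perfect, has no nontrivial abelian quotient, and is not a ``module'' of any rank. (The device that actually works for such products is different: for an ultrafilter $\U$ on $\w_1$ extending the co-countable filter, the kernel $\set{x}{\set{\alpha}{x_\alpha=1}\in\U}$ of the reduction to the ultraproduct is a dense subgroup of finite index, hence non-measurable.) Third, and minor but symptomatic: a countable compact Hausdorff space need not be finite ($\w+1$ is a counterexample); your argument that closed countable-index subgroups are open needs homogeneity of the coset space, or more simply Baire category. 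None of these defects condemns the overall strategy --- which is indeed the countable-index strategy the paper attributes to \cite{HHM} --- but with the semisimple case missing and these reductions broken, the proposal does not constitute a proof of the lemma.
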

\begin{proof}
This is the main result of \cite{HHM}.
\end{proof}

\begin{lemma}\label{CantorSpace}
Every infinite, metric, profinite group is, as a topological space, homeomorphic to $2^\w$.
\end{lemma}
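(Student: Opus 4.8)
The plan is to invoke a standard characterization of the Cantor space. The cleanest route is Brouwer's theorem: a topological space is homeomorphic to $2^\w$ if and only if it is a nonempty, compact, metrizable, totally disconnected, perfect (i.e. without isolated points) space. So the entire proof reduces to verifying these five properties for an arbitrary infinite metric profinite group $G$.

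Three of the five are essentially immediate. By hypothesis $G$ is infinite, hence nonempty; it is metric, hence metrizable; and it is profinite, meaning it is an inverse limit of finite (discrete) groups, which makes it compact and totally disconnected as a topological space. The total disconnectedness is built into profiniteness: a profinite group has a neighborhood basis at the identity consisting of open (hence clopen, by compactness) normal subgroups, so the clopen sets separate points and the space is totally disconnected. Thus the only genuine content is the \emph{perfect} condition, that $G$ has no isolated points.

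The main obstacle, then, is ruling out isolated points, and here I would use the homogeneity of topological groups. In any topological group, left translation $x \mapsto gx$ is a homeomorphism, so the space looks the same at every point; in particular, if one point were isolated, then every point would be isolated, forcing $G$ to be discrete. But a compact discrete space is finite, contradicting the assumption that $G$ is infinite. Hence $G$ has no isolated points and is perfect. I expect this homogeneity argument to be the crux, though it is short; the real work has simply been pushed into citing Brouwer's characterization and recalling that profinite groups are compact and totally disconnected.

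An alternative, more self-contained approach would avoid quoting Brouwer's theorem and instead build a homeomorphism by hand: using the clopen normal subgroups $\{N_i\}$ with trivial intersection, one writes $G$ as the inverse limit $\varprojlim G/N_i$ of finite quotients and shows the natural map into $\prod_i (G/N_i)$ is an embedding onto a closed subset, then checks this product is homeomorphic to $2^\w$ after discarding redundant factors. This is more laborious and requires care to ensure each successive quotient is a proper refinement (so that no coordinate is trivial and the resulting product is genuinely a Cantor set rather than a finite space), so I would favor the Brouwer-characterization route, reserving the explicit construction only if one wants to exhibit the coordinate structure for later use.
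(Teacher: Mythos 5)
Your proof is correct, but it takes a genuinely different route from the paper: the paper's entire proof of this lemma is a citation to Theorem 10.40 of Hofmann and Morris, \emph{The Structure of Compact Groups}, whereas you give a self-contained argument. Your route --- invoke Brouwer's characterization of the Cantor set (nonempty, compact, metrizable, totally disconnected, perfect) and then check each property --- is the standard ``from first principles'' proof, and every step you give is sound: compactness and total disconnectedness are built into profiniteness, metrizability and nonemptiness are hypotheses, and the only real content is perfectness, which your homogeneity argument handles cleanly (an isolated point would make every point isolated by translation, forcing a compact discrete, hence finite, group). One small point worth making explicit: Brouwer's theorem is usually stated with ``zero-dimensional'' rather than ``totally disconnected,'' but for compact metric spaces the two notions coincide, so your verification suffices. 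What each approach buys: the paper's citation is shorter and leans on a structural result about compact groups, keeping the exposition minimal; your argument is more transparent and only needs Brouwer's theorem (which appears, e.g., as Theorem 7.4 in Kechris's book, already in the paper's bibliography) plus elementary facts about topological groups, so it would make the paper more self-contained at the cost of a paragraph. Your fallback suggestion of building the homeomorphism by hand via an inverse limit of finite quotients would also work but is unnecessary extra labor, as you correctly judged.
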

\begin{proof}
See Theorem 10.40 in \cite{H&M}.
\end{proof}

For the statement of the next lemma, two measures $\mu$ and $\nu$ on a set $X$ are \textbf{isomorphic} if there is a bijection $\varphi: X \to X$ such that, for every $Y \sub X$, $Y$ is $\mu$-measurable if and only if $\varphi(Y)$ is $\nu$-measurable, and if this is the case then $\mu(Y) = \nu(\varphi(Y))$. By the Lebesgue measure on $2^\w$ we mean the unique measure generated by giving the set $\set{f \in 2^\w}{f(n) = 0}$ measure $\frac{1}{2}$ for each $n$.

\begin{lemma}\label{CantorSpace2}
The (normalized) Haar measure for any group structure on $2^\w$ is isomorphic to the Lebesgue measure.
\end{lemma}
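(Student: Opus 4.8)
The plan is to reduce the statement to the classical isomorphism theorem for standard (Lebesgue--Rokhlin) probability spaces, which asserts that any two atomless probability measures on a standard Borel space are isomorphic modulo null sets via a Borel bijection between conull Borel subsets, and then to upgrade that mod-null isomorphism to a genuine bijection of $2^\w$. First I would record that both measures in question are atomless regular Borel probability measures on the same underlying standard Borel space. For the Lebesgue measure this is immediate, since every singleton has measure $\prod_n \frac 12 = 0$. For the Haar measure $\mu$ of a given group structure on $2^\w$, atomlessness follows from translation invariance: if some singleton had measure $c>0$, then every singleton would have measure $c$, and since the group is infinite this forces $nc \le 1$ for all $n$, hence $c=0$. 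Both measures induce the same Borel $\sigma$-algebra, namely that of the topology of $2^\w$.

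Second, I would apply the isomorphism theorem to obtain a Borel isomorphism $f \colon X_0 \to Y_0$ between a $\mu$-conull Borel set $X_0$ and a $\lambda$-conull Borel set $Y_0$ (where $\lambda$ denotes Lebesgue measure) pushing $\mu$ forward to $\lambda$. The complements $N_X = 2^\w \setminus X_0$ and $N_Y = 2^\w \setminus Y_0$ are null Borel sets, and the goal is to extend $f$ across them to a bijection $\varphi$ of all of $2^\w$. This requires a bijection between $N_X$ and $N_Y$, which exists once their cardinalities agree. I would arrange this by enlarging the discarded sets: inside $X_0$ one can find a $\mu$-null Borel set $A$ of cardinality $\continuum$ (for instance the preimage of a middle-thirds Cantor set under a measure isomorphism of $(X_0,\mu)$ with the unit interval), and replacing $X_0$ by $X_0 \setminus A$ and $Y_0$ by $Y_0 \setminus f(A)$ leaves $f$ measure-preserving while making both complements have cardinality $\continuum$. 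Any bijection $g \colon N_X \to N_Y$ between the enlarged complements then extends $f$ to a bijection $\varphi = f \cup g$ of $2^\w$.

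Finally, I would verify that $\varphi$ is an isomorphism in the sense defined above, and here the key observation is that ``measurable'' refers to the completed ($\mu$- or $\lambda$-measurable) $\sigma$-algebra, which is insensitive to what happens on null sets. For any $Y \sub 2^\w$, the piece $Y \cap N_X$ is a subset of a null set, hence automatically measurable with measure zero, and likewise $\varphi(Y) \cap N_Y = g(Y \cap N_X)$; thus $Y$ is $\mu$-measurable if and only if $Y \cap X_0$ is, and $\varphi(Y)$ is $\lambda$-measurable if and only if $f(Y \cap X_0)$ is. Since $f$ is a measure-preserving Borel isomorphism it carries the completed measure algebra of $(X_0,\mu)$ onto that of $(Y_0,\lambda)$, which yields both the equivalence of measurability and the equality $\mu(Y) = \mu(Y\cap X_0) = \lambda(f(Y\cap X_0)) = \lambda(\varphi(Y))$. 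I expect the main obstacle to be precisely this last upgrade --- passing from the standard mod-null Borel isomorphism to an honest set-theoretic bijection that respects measurability of \emph{all} subsets --- together with the cardinality bookkeeping on the null complements that makes it go through; the atomlessness and the invocation of the isomorphism theorem are routine by comparison.
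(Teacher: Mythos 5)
Your proposal is correct and takes essentially the same route as the paper: the paper's entire proof is a citation of the measure isomorphism theorem (Theorem 17.41 in Kechris) together with the observation that the Haar measure of any group structure on $2^\w$ is continuous (atomless), which is exactly your reduction. The cardinality bookkeeping you do to upgrade the mod-null isomorphism to an honest bijection of $2^\w$, and the passage to completed $\sigma$-algebras so that \emph{all} subsets are handled, are precisely the details hidden inside that citation, so your write-up is a fleshed-out version of the same argument rather than a different one.
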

\begin{proof}
This follows from Theorem 17.41 in \cite{Kec} together with the fact that the Haar measure for any group structure on $2^\w$ is continuous. A stronger version of this result, with a more constructive proof, is given in \cite{B&M}.
\end{proof}

\begin{lemma}\label{randomreals}
There is a model of \emph{ZFC} in which the following holds: there is a subset $X$ of $2^\w$ such that $X$ is not Lebesgue measurable and $\card{X} < 2^{\aleph_0}$.
\end{lemma}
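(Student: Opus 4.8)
The plan is to reduce the statement to a single cardinal-characteristic inequality together with one ZFC construction. Write $\mathrm{non}(\N)$ for the least cardinality of a subset of $2^\w$ that is not Lebesgue-null; by definition every set of size $<\mathrm{non}(\N)$ is null, hence measurable, so any non-measurable set has size at least $\mathrm{non}(\N)$. I would prove the lemma in two moves: first, a ZFC claim that there is a \emph{non-measurable} set of size $\mathrm{non}(\N)$; and second, the observation that it is consistent that $\mathrm{non}(\N) < \continuum$. Combining these, in any model where $\mathrm{non}(\N) < \continuum$ the first move supplies a non-measurable $X \sub 2^\w$ with $\card X = \mathrm{non}(\N) < \continuum$, which is exactly what is required.

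For the ZFC construction I would exploit the group structure $(2^\w,+)$ given by coordinatewise addition modulo $2$, together with the translation invariance of Lebesgue measure. Fix a non-null set $A$ with $\card A = \mathrm{non}(\N)$, let $D \sub 2^\w$ be the countable dense subgroup of finitely supported sequences, and let $R$ be a selector meeting each coset of $D$ in exactly one point, so that $\set{R + d}{d \in D}$ partitions $2^\w$. Since $A = \bigcup_{d \in D}(A \cap (R+d))$ is a countable union and $A$ is non-null, some piece $A \cap (R+d_0)$ is non-null; set $A' = (A \cap (R+d_0)) - d_0$, a non-null subset of $R$ with $\card{A'} \le \mathrm{non}(\N)$. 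The translates $\set{A' + d}{d \in D}$ are pairwise disjoint because $A' + d \sub R + d$ and distinct cosets are disjoint. If $A'$ were measurable it would have positive measure (being non-null), and countable additivity would give $\mu\!\left(\bigcup_{d \in D}(A'+d)\right) = \sum_{d \in D}\mu(A'+d) = \infty$, contradicting $\mu(2^\w)=1$; hence $A'$ is non-measurable. Since non-measurable sets have size at least $\mathrm{non}(\N)$, in fact $\card{A'} = \mathrm{non}(\N)$ (though for the lemma only $\card{A'} < \continuum$ is needed).

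For the consistency of $\mathrm{non}(\N) < \continuum$ I would appeal to the random real model: forcing with the measure algebra of $2^{\aleph_2}$ over a model of CH keeps $\mathrm{non}(\N) = \aleph_1$ while making $\continuum = \aleph_2$ (a standard computation in the theory of cardinal characteristics; see Bartoszy\'nski--Judah). In such a model the construction above, applied to any non-null set of size $\aleph_1$, produces a non-measurable subset of $2^\w$ of size $\aleph_1 < \continuum$, establishing the lemma.

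I expect the only genuinely delicate point to be the cardinality control in the construction: the naive Vitali set (a full selector $R$ itself) is non-measurable but has size $\continuum$, so it is useless here. The trick that makes the argument work is to cut $R$ down by intersecting it with a minimal non-null set before translating, so that the disjoint-translate and countable-additivity argument still forces non-measurability while the cardinality stays at $\mathrm{non}(\N)$. The consistency input is then a black-box citation about a standard forcing model, so no independent forcing argument is needed.
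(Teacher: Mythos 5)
Your proof is correct, but it takes a genuinely different route from the paper's. The paper disposes of the lemma with a single citation: in the random real model (adding many random reals over a model of CH), any uncountable set of random reals is non-measurable — that fact, quoted from Jech, supplies both the non-measurability and the cardinality bound $\aleph_1 < 2^{\aleph_0}$ at once. You instead factor the statement into a ZFC theorem plus a cardinal-characteristic consistency: (i) in ZFC there is a non-measurable subset of $2^\omega$ of cardinality $\mathrm{non}(\mathcal{N})$ — your Vitali-style argument (cutting a selector for the cosets of the finitely-supported subgroup down to a non-null piece of minimal size, then using pairwise disjoint translates and countable additivity against $\mu(2^\omega)=1$) is a correct and standard proof of the folklore fact that the least size of a non-measurable set equals $\mathrm{non}(\mathcal{N})$; and (ii) $\mathrm{non}(\mathcal{N}) < 2^{\aleph_0}$ is consistent, which you obtain from the very same random real model the paper uses, but cited only for its Cicho\'n-diagram values. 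So the forcing model is shared, but the source of non-measurability differs: the paper takes it from the forcing-theoretic behavior of random reals (entirely a black box), while you derive it inside ZFC from translation invariance, needing only a cardinal inequality from the model. Your decomposition buys modularity and transparency — the lemma holds in \emph{any} model where $\mathrm{non}(\mathcal{N}) < 2^{\aleph_0}$, and the measure-theoretic content is explicit rather than cited — at the cost of a longer argument; the paper's route is shorter but leaves all the work in the reference. Two cosmetic points: the density of $D$ is never used (only that $D$ is a countably infinite subgroup), and your phrase ``distinct cosets are disjoint'' should read ``distinct translates of $R$ are disjoint,'' which is exactly the partition property you had already established.
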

\begin{proof}
This result is well-known. The idea is essentially this: if $M$ is a model of ZFC and we add many random reals to $M$ by forcing, then any uncountable subset of random reals will be non-measurable in the extension. See \cite{Jec}, pp. 535-536, for details.
\end{proof}

We can now piece these results together to prove our main theorem:

\begin{theorem}\label{consistencyproof}
It is consistent with \emph{ZFC} that every infinite compact group has a non-measurable subgroup.
\end{theorem}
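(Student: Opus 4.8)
The plan is to reduce everything to the metric profinite case and then exploit the small non-measurable set provided by Lemma~\ref{randomreals}. First I would fix a model of ZFC as in Lemma~\ref{randomreals}, so that there is a set $X \sub 2^\w$ which is not Lebesgue measurable and satisfies $\card{X} < \continuum$; all the remaining work takes place inside this model. Let $G$ be an arbitrary infinite compact group. If $G$ is not a metric profinite group, then Lemma~\ref{HHMtheorem} already furnishes a non-measurable subgroup (this is a theorem of ZFC, hence holds in our model), and there is nothing more to do. So I would assume from now on that $G$ is an infinite metric profinite group.

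Next I would transport the set $X$ into $G$. By Lemma~\ref{CantorSpace}, $G \homeo 2^\w$ as a topological space, so in particular $\card{G} = \continuum$. Carrying the group operations of $G$ across this homeomorphism equips $2^\w$ with a group structure whose Haar measure is the push-forward of the Haar measure of $G$; by Lemma~\ref{CantorSpace2} this measure is isomorphic to Lebesgue measure. Composing these identifications yields a bijection $\psi : G \to 2^\w$ under which a set is Haar measurable in $G$ exactly when its image is Lebesgue measurable in $2^\w$. Then $Y := \psi^{-1}(X)$ is a non-Haar-measurable subset of $G$ with $\card{Y} = \card{X} < \continuum$.

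The crux is to pass from the non-measurable \emph{set} $Y$ to a non-measurable \emph{subgroup}. I would let $H = \brackets{Y}$ be the subgroup of $G$ generated by $Y$. Since $Y$ is non-measurable it cannot be null, and because every countable set is Haar-null (the Haar measure here is continuous), $Y$ must be uncountable; hence $\card{H} = \card{Y} < \continuum$, which forces $[G : H] = \continuum$. Now I claim $H$ is non-measurable. Suppose toward a contradiction that $H$ were measurable. Its cosets are pairwise disjoint and all have the same measure $\mu(H)$ by translation invariance, and there are uncountably many of them; if $\mu(H) > 0$ the total measure would be infinite, contradicting $\mu(G) = 1$. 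Hence $\mu(H) = 0$. But then $Y \sub H$ is a subset of a null set, so $Y$ is measurable, contradicting the choice of $Y$. Therefore $H$ is non-measurable, which completes the proof in the metric profinite case and hence in general.

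I expect the main obstacle to be conceptual rather than technical: recognizing that a small non-measurable set, harmless on its own, is forced to generate a non-measurable subgroup precisely because the resulting index is large. The dichotomy that a measurable subgroup must be either null or of finite index is what lets the cardinality bound $\card{Y} < \continuum$ do all the work, and the remaining verifications (that the measure isomorphism carries non-measurability back to $G$, and that $\card{\brackets{Y}} = \card{Y}$ for uncountable $Y$) are routine.
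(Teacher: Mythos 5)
Your proof is correct, and its skeleton---reduce to the metric profinite case via Lemma~\ref{HHMtheorem}, identify $G$ topologically with $2^\w$ and its Haar measure with Lebesgue measure via Lemmas~\ref{CantorSpace} and~\ref{CantorSpace2}, import the small non-measurable set of Lemma~\ref{randomreals}, and show the subgroup it generates is non-measurable---is exactly the paper's. The one genuine difference is how the positive-measure alternative is excluded, and which side of the identification you work on. The paper pushes $G$ onto $2^\w$ and reasons with Lebesgue measure, which (as it explicitly warns) forfeits translation invariance; consequently, to rule out $\mu(H)>0$ it must invoke the Steinhaus-type fact from \cite{HHM} (Proposition 1.1(b)) that a measurable subgroup of positive measure is closed, hence, being infinite and closed in a space homeomorphic to $2^\w$, of cardinality $\continuum$, contradicting $\card{H}<\continuum$. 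You instead pull the non-measurable set back into $G$, so the measure in play is genuine Haar measure and translation invariance is available: a measurable subgroup of infinite index with positive measure would yield infinitely many pairwise disjoint cosets of equal positive measure inside a probability space, which is absurd. Your route is more elementary (no second appeal to \cite{HHM}), at the cost of having to track where invariance holds---which you do correctly by choosing the direction of transport. Two minor points: with uncountably many cosets the union need not be measurable, so one should select a countable subfamily of cosets and apply countable additivity (this is implicit in your argument); and, just as in the paper, your null case relies on completeness of Haar measure, so that $Y \sub H$ with $\mu(H)=0$ forces $Y$ to be measurable.
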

\begin{proof}
Let $G$ be a compact group. By Theorem~\ref{HHMtheorem} and Lemma~\ref{CantorSpace}, we may assume that $G$, considered as a topological space, is homeomorphic to $2^\w$. By Lemma~\ref{CantorSpace2}, we may assume that the measure on $G$ is Lebesgue measure (provided we do not use the translation invariance property that is specific to Haar measure). By Lemma~\ref{randomreals}, it is consistently true that there is a nonmeasurable subset $X$ of $2^\w$ such that $\card{X} < 2^{\aleph_0}$. We will use this hypothesis to obtain a non-measurable subgroup of $G$

Let $H = \<X\>$ be the subgroup of $G$ generated by $X$. Clearly $\card{H} = \card{X} \cdot \aleph_0 = \card{X} < 2^{\aleph_0}$. $H$ cannot have measure $0$, because then every subset of $H$, including $X$, would have measure $0$. $H$ also cannot have positive measure, since then $H$ would be closed (and infinite) in $2^\w$ and thus of cardinality $2^{\aleph_0}$ (see \cite{HHM}, Proposition 1.1(b) for why $H$ would be closed). Thus $H$ is nonmeasurable.
\end{proof}

\section{The construction}\label{sec:constructions}



We now exhibit a technique for obtaining subgroups of a given compact group. In general, we do not know whether this construction produces non-measurable subgroups. We will prove that it does so in the Abelian case and leave the general case open. However, we conjecture that this technique always produces non-measurable subgroups of profinite groups; i.e., it provides a possible candidate for a solution to the question at hand.

\begin{theorem}\label{Zorn}
Let $G$ be an infinite, non-discrete group, let $p \in G \setminus \{1\}$, and let $Q$ be a proper dense subgroup of $G$.
\begin{enumerate}
\item There is a subgroup $M$ of $G$ such that $p \notin M \supseteq Q$ and, for any $x \notin M$, $p \in \<M,x\>$.
\item If $G$ is Abelian, then any such $M$ is non-measurable.
\end{enumerate}
\end{theorem}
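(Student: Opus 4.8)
The plan is to handle the two parts separately: part (1) is a routine application of Zorn's Lemma, while part (2) rests on an algebraic analysis of the quotient $G/M$ together with the two non-measurability criteria recalled in Section~1.

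For part (1), I would apply Zorn's Lemma to the poset $\mathcal{P}$ of all subgroups $H$ of $G$ with $Q \sub H$ and $p \notin H$, ordered by inclusion. This poset is nonempty, since it contains $Q$ (here the hypotheses must force $p \notin Q$, as otherwise no such $M$ can exist, so we may assume this). The union of any chain in $\mathcal{P}$ is again a subgroup that contains $Q$ and omits $p$, hence lies in $\mathcal{P}$ and bounds the chain, so $\mathcal{P}$ has a maximal element $M$. Maximality is precisely the stated property: if $x \notin M$ then $\<M,x\>$ properly contains $M$, so $\<M,x\> \notin \mathcal{P}$; since it still contains $Q$, the only possible reason is $p \in \<M,x\>$.

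For part (2) I would use that $G$ is Abelian to pass to the quotient $A = G/M$ (well defined, as every subgroup of an Abelian group is normal), writing $a = pM$. Then $a \neq 0$ because $p \notin M$, and since $\<M,x\>/M = \<xM\>$, the generation property becomes: for every nonzero $c \in A$ one has $a \in \<c\>$; equivalently, $\<a\>$ is contained in every nonzero cyclic subgroup of $A$, and hence in every nonzero subgroup. The heart of the argument is then to classify such an $A$. First, $\<a\>$ has no proper nonzero subgroup, so $a$ has prime order $\ell$. Next, $A$ is torsion, since an element of infinite order would generate a copy of $\Z$, which cannot contain the order-$\ell$ element $a$; the same divisibility consideration shows $A$ is an $\ell$-group. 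Finally $A$ has a unique subgroup of order $\ell$, namely $\<a\>$, so its socle is the single line $\<a\> \cong \Z/\ell$. An Abelian $\ell$-group with one-dimensional socle is cocyclic, i.e.\ isomorphic to $\Z/\ell^k$ or to the Pr\"ufer group $\Z(\ell^\infty)$ (for instance via the injective hull, whose socle coincides with that of $A$ and is therefore a single copy of $\Z(\ell^\infty)$). In either case $A$ is countable, so $[G:M] = \card{A} \le \aleph_0$.

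It then remains to conclude non-measurability in two cases. If $[G:M] = \aleph_0$, then $M$ is non-measurable by the translation invariance and countable additivity of Haar measure, as recalled in Section~1. If $[G:M]$ is finite, then $M$ is dense, since it contains the dense subgroup $Q$, and proper, since $p \notin M$; a dense proper subgroup is not closed, and a non-closed subgroup of finite index is non-measurable by Proposition 1.1(b) of \cite{HHM}. Either way $M$ is non-measurable. I expect the classification of $A$ to be the main obstacle: recognizing that the maximality condition forces $\<a\>$ to lie inside every nonzero subgroup, and then identifying the resulting groups as exactly the cocyclic $\ell$-groups $\Z/\ell^k$ and $\Z(\ell^\infty)$, is what bounds the index by $\aleph_0$ and makes both criteria from Section~1 applicable.
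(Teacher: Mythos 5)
Your proof is correct, and while part (1) coincides with the paper's (the same Zorn's Lemma argument; you additionally, and rightly, flag that the hypotheses implicitly require $p \notin Q$, a point the paper passes over silently), your part (2) takes a genuinely different route. The paper never classifies the quotient: it uses the character theory of discrete Abelian groups to produce $f : G/M \to \R/\Z$ with $f(p+M) \neq 0$, observes that maximality of $M$ forces $\ker f$ to be trivial (so $f$ is injective), and then deduces countability of $f(G/M)$ by a counting argument --- every nonzero element of the image has $f(p+M)$ in the cyclic group it generates, and a fixed element of $\R/\Z$ has only finitely many $n$-th roots for each $n$, hence countably many roots in all. You instead work intrinsically in $A = G/M$: the maximality condition says exactly that $\<pM\>$ lies in every nonzero subgroup of $A$, i.e.\ that $A$ is cocyclic, and the socle/injective-hull argument identifies $A$ as $\Z/\ell^k$ or $\Z(\ell^\infty)$, which is countable. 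Both routes ultimately rest on injectivity of divisible groups (the paper through the existence of a separating character into $\R/\Z$, you through the hull $\Z(\ell^\infty)$), and both finish identically, by the two criteria from Section~1 (countably infinite index, or finite index and non-closed since $M$ is dense and proper). Your approach is longer but yields strictly more information --- a complete structural description of $G/M$, not just its countability --- while the paper's is quicker if one takes the character-existence theorem as a black box.
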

\begin{proof}
$(1)$ Let $\p$ be the set of all subgroups $X$ of $G$ such that $p \notin X \supseteq Q$. Let $\C$ be a subset of $\p$ totally ordered by $\sub$. Then $p \notin \bigcup \C \supseteq Q$. Furthermore, $\bigcup C$ is a group because if $x,y \in \bigcup \C$ then there is some $X_x,X_y \in \C$ with $x \in X_x$ and $y \in X_y$, and since $X_x \sub X_y$ without loss of generality, we have $x^{-1}y \in X_y$, hence $x^{-1}y \in \bigcup \C$. Thus the conditions of Zorn's Lemma are satisfied: an application of Zorn's Lemma yields an element $M$ of $\p$ that is not properly contained in any other element of $\p$. This $M$ is the desired group.

$(2)$ Suppose $G$ is Abelian, and $M$ is as given above. For every $x \in G \setminus M$, $p \in \<M,x\>$. Using the assumption that $G$ is Abelian, for every $x \in G \setminus M$ there is some $n \in \Z$ such that $p \in nx+M$. Since $p \in G \setminus M$, the coset $p+M$ is not the identity of $G / M$. Since $G/M$ is Abelian, there is a character $f: G/M \to \R/\Z$ with $f(p+M)$ different from the identity. Then $\mathrm{ker}(f)$ is a subgroup of $G/M$ that does not contain $p+M$. Since $M$ is maximal among all subgroups of $G$ that contain $Q$ and not $p$, we must have $M = \mathrm{ker}(f)$. Hence $f$ is injective.

Because $p \in \<M,x\>$ for every $x \in G \setminus M$, we have $f(p+M) \in \<f(M),f(x+M)\> = \<f(x+M)\>$ for every $x \in G \setminus M$. This implies that for each $x \in G \setminus M$ there is some $n \in Z$ such that $f(x+M)^n = f(M)$. But any element of $\R/\Z$ has only finitely many roots of each order, hence only countably many roots. Thus $f(G/M)$ is countable, and $[G:M]$ is countable. Recall that $Q \sub M \neq X$, $M$ is not clopen. As discussed in the introduction, any non-clopen subgroup of countable index is non-measurable.
\end{proof}

Note that every infinite, metric, profinite group has a proper dense subgroup. This follows from Lemma~\ref{CantorSpace}: any such group has a countable dense subset $Q$, and then $\<Q\>$ is the desired subgroup.

\begin{question}
If $G$ is an infinite, metric, profinite group and $Q$ is a proper dense subgroup of $G$, does Theorem~\ref{Zorn}$(1)$ provide a non-measurable subgroup of $G$?
\end{question}

\end{document}